\newtheorem{theorem}{Theorem}[section]
\newtheorem{proposition}[theorem]{Proposition}
\newtheorem{lemma}[theorem]{Lemma}
\newtheorem{corollary}[theorem]{Corollary}
\theoremstyle{definition}
\newtheorem{remark}[theorem]{Remark}
\newtheorem{definition}[theorem]{Definition}
\newtheorem{example}[theorem]{Example}
\numberwithin{equation}{section}
\begin{document}

\title[Two-sided zero  product determined algebras]{Two-sided zero  product determined algebras}

\thanks{Supported by the Slovenian Research Agency (ARRS) Grant P1-0288. }

\author{\v Zan Bajuk}
\address{\v Z. B., Faculty of Mathematics and Physics,  University of Ljubljana, Slovenia}
\email{zanbajuk@gmail.com}

\author{Matej Bre\v sar} 
\address{M. B., Faculty of Mathematics and Physics,  University of Ljubljana,  and Faculty of Natural Sciences and Mathematics, University
of Maribor, Slovenia}
\email{matej.bresar@fmf.uni-lj.si}
\keywords{Two-sided zero  product determined algebra, zero product determined algebra, separable algebra, derivation}

\subjclass[2020]{15A30, 16H05, 16P10, 16W25}

\begin{abstract}   An algebra $A$ is said to be  two-sided zero product determined  if every   bilinear functional $\varphi:A\times A\to F$
satisfying $ \varphi(x,y)=0$ whenever $xy=yx=0$ 
is of the form 
$\varphi(x,y)=\tau_1(xy) + \tau_2(yx)$ for some linear functionals $\tau_1,\tau_2$ on $A$. We present  some basic properties and equivalent definitions,  examine connections with some properties of derivations, and as the main result prove that 
a finite-dimensional simple algebra  that is not a division algebra  is   two-sided zero product determined  if and only if it is separable. 
\end{abstract}

\newcommand\E{\ell}
\newcommand\mathcalM{{\mathcal M}}
\newcommand\pc{\mathfrak{c}}

\newcommand{\enp}{\begin{flushright} $\Box$ \end{flushright}}

\maketitle

\section{Introduction} 

Let $F$ be an arbitrary  field. 
  By an algebra, we will always mean an associative $F$-algebra with unity $1$. The restriction to unital algebras is not absolutely necessary, but will make our exposition simpler.  We will also assume that all algebra bimodules are unital.
  
 An algebra $A$ is said to be {\em zero product determined} ({\em zpd} for short)  if every   bilinear functional $\varphi:A\times A\to F$ with the property that for all $x,y\in A$, $xy=0$ implies 
  $\varphi(x,y)=0$, is of the form $\varphi(x,y)=\tau(xy) $ for some linear functional $\tau$. These algebras are the subject of the  recent book \cite{zpdbook}.
  
  This paper is devoted to the following related notion.
  
  \begin{definition}
  An algebra $A$ is  {\em two-sided zero product determined} ({\em 2-zpd} for short) if every   bilinear functional $\varphi:A\times A\to F$ with the property that for all $x,y\in A$,
\begin{equation} \label{2-zpd} xy=yx=0\implies \varphi(x,y)=0,\end{equation}
is of the form 
\begin{equation} \label{ta}\varphi(x,y)=\tau_1(xy) + \tau_2(yx)\end{equation} for some linear functionals $\tau_1,\tau_2$ on $A$. 
  \end{definition}
  
These algebras have indirectly appeared in the consideration of zpd algebras, but have, so far, played an important role only in the functional analytic branch of the theory  \cite[Chapter 6]{zpdbook}. Besides the result by  Kosan,  Lee, and  Zhou
\cite{KLZ} which states that a finite-dimensional central simple algebra that is not a division algebra is  2-zpd,
 not much is known  in the purely algebraic context.
Our aim is to fill this gap.

In Section \ref{s2} we study formal properties of 2-zpd algebras. In particular, we 
present some equivalent definitions and discuss the relation between 2-zpd,  zpd, and  {\em zero Lie product determined}  algebras ({\em zLpd} for short). The latter  are algebras $A$ such that every   bilinear functional $\varphi:A\times A\to F$ with the property that for all $x,y\in A$, $ [x,y]=0$ implies 
  $\varphi(x,y)=0$, is of the form $\varphi(x,y)=\tau([x,y]) $ for some linear functional $\tau$. Here, as usual, $[x,y]$ stands for $xy-yx$.


In Section \ref{s3} we will see that the condition that an algebra $A$ is 2-zpd is closely related to some properties of derivations of $A$, more specifically, with the innerness of derivations from $A$ to its dual $A^*$ and with the property that the images of derivations lie in the linear span of square-zero elements. 

Using some of the results in Sections \ref{s2} and \ref{s3}, we prove in  Section \ref{s4} that a finite-dimensional simple algebra  that is not a division algebra  is 2-zpd   if and only if it is separable. This generalizes  the aforementioned result from \cite{KLZ} and  is the main result of the paper. 
It was discovered rather unexpectedly since 
the definition 
of a 2-zpd algebra is apparently entirely unrelated  to the definition of 
 the classical notion of a 
separable algebra.

We will not discuss applications in this short paper. Some indications of the usefulness of the concept of a 2-zpd algebra are given in \cite[Section 7.1]{zpdbook} and \cite{BGV}, and we hope to provide more evidence in the
future.

\section{Introductory remarks}\label{s2}

 
We begin by presenting the basic connection between zpd, zLpd, and 2-zpd algebras.

\begin{proposition}\label{zpzp}
A 2-zpd algebra $A$ is  zpd and zLpd.  
\end{proposition}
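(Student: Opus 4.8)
The plan is to handle both conclusions by the same device. If a bilinear functional $\varphi\colon A\times A\to F$ satisfies the defining hypothesis of a zpd (or of a zLpd) algebra, then it also satisfies \eqref{2-zpd}, so the hypothesis that $A$ is 2-zpd already supplies linear functionals $\tau_1,\tau_2$ with $\varphi(x,y)=\tau_1(xy)+\tau_2(yx)$ for all $x,y$; it then remains only to absorb the term $\tau_2(yx)$, and for this I would feed the unit $1$ into suitably chosen bilinear functionals.

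For zLpd this is immediate: since $[x,1]=0$ we get $\tau_1(x)+\tau_2(x)=\varphi(x,1)=0$ for every $x$, hence $\tau_2=-\tau_1$ and $\varphi(x,y)=\tau_1([x,y])$, as required.

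For zpd, specializing $\varphi(x,y)=\tau_1(xy)+\tau_2(yx)$ to pairs with $xy=0$ shows that $\tau_2(pq)=0$ whenever $qp=0$. I claim this forces $\tau_2$ to vanish on $[A,A]$; granting it, $\tau_2(yx)=\tau_2(xy)$ and $\varphi(x,y)=(\tau_1+\tau_2)(xy)$. To prove the claim, fix $a\in A$ and consider the bilinear functional $(x,y)\mapsto\tau_2(xay)$. If $yx=0$ then $(ay)x=0$, so $\tau_2\bigl(x(ay)\bigr)=0$; hence this functional vanishes on every pair with $xy=yx=0$, and a second application of the 2-zpd hypothesis gives linear functionals $\alpha,\beta$ with $\tau_2(xay)=\alpha(xy)+\beta(yx)$. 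Putting $y=1$ and then $x=1$ yields $\tau_2(xa)=\alpha(x)+\beta(x)=\tau_2(ax)$ for all $x$, i.e.\ $\tau_2([a,x])=0$ for all $x$; since $a$ was arbitrary, $\tau_2$ indeed vanishes on $[A,A]$.

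The only step that is not routine bookkeeping is the introduction of the auxiliary functional $(x,y)\mapsto\tau_2(xay)$: plugging $1$ directly into $\varphi$ only determines $\tau_1+\tau_2$ on one element and cannot control $\tau_2(yx)$, so one genuinely needs this extra twist—using that $yx=0$ is inherited by $(ay)x$—in order to apply the 2-zpd property a second time and conclude that $\tau_2$ vanishes on commutators. I expect this to be the main (and essentially only) difficulty.
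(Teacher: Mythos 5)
Your proof is correct, and while the zLpd half coincides exactly with the paper's argument, the zpd half takes a genuinely different route. The paper disposes of the zpd case very quickly: from $\varphi(x,y)=\tau_1(xy)+\tau_2(yx)$ it extracts only the identity $\varphi(x,1)=\varphi(1,x)$ and then invokes an external result (\cite[Proposition 2.7]{zpdbook}, which says that a bilinear functional vanishing on zero products and satisfying this symmetry at $1$ is automatically of the form $\tau(xy)$). You instead make the argument self-contained by applying the 2-zpd hypothesis a \emph{second} time, to the auxiliary functional $(x,y)\mapsto\tau_2(xay)$: the observation that $yx=0$ forces $(ay)x=0$, hence $\tau_2(xay)=0$, is exactly the twist needed to conclude $\tau_2(xa)=\tau_2(ax)$ for all $x,a$, so that $\tau_2$ kills $[A,A]$ and can be absorbed into $\tau_1$. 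Each step checks out ($xy=0$ does give $\tau_2(yx)=0$, and the substitutions $y=1$, $x=1$ legitimately use that $A$ is unital). What the paper's route buys is brevity, at the cost of leaning on a nontrivial fact about zpd-type functionals proved elsewhere; what your route buys is a proof that uses nothing beyond the 2-zpd definition itself, at the cost of an extra application of that definition and a short commutator computation. Both are valid; yours would be the natural choice if one wanted the proposition to stand independently of the book.
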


\begin{proof}
Let
 $\varphi:A\times A\to F$ be a bilinear functional.
 
Suppose  $\varphi(x,y)=0$ whenever $xy=0$. In particular, $\varphi(x,y)=0$ whenever $xy=yx=0$, so
 $\varphi(x,y)=\tau_1(xy)+\tau_2(yx)$ for some linear functionals $\tau_1$ and $\tau_2$. This  yields $\varphi(x,1)=\varphi(1,x)$, which is enough to conclude that $A$ is zpd \cite[Proposition 2.7]{zpdbook}. 
 
 Suppose now that $\varphi(x,y)=0$ whenever $[x,y]=0$. Since $xy=yx=0$ implies $xy=yx$, we have, as above,
 $\varphi(x,y)=\tau_1(xy)+\tau_2(yx)$. Further, $[x,1] =0$, hence $\varphi(x,1)=0$, and therefore $\tau_2=-\tau_1$. Thus,  $\varphi(x,y)=\tau_1([x,y])$, which shows that $A$ is zLpd. 
\end{proof}

Example \ref{ezz} below shows that the converse is not true, i.e., an algebra $A$ that is zpd and zLpd may not be 2-zpd. On the other hand, if $A$ is commutative,
 then there is obviously no difference between the conditions that $A $ is zpd and $A$ is 2-zpd. 
 Since every algebra that is generated by idempotents is zpd \cite[Theorem 2.15]{zpdbook}, the following proposition holds.
 
 \begin{proposition} If a commutative algebra $A$  is generated by idempotents, then $A$ is 2-zpd.
 \end{proposition}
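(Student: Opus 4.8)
The plan is to reduce the claim to the already-quoted facts: a commutative algebra is 2-zpd if and only if it is zpd, and any algebra generated by idempotents is zpd. So the entire content of the proof is a two-line chain of implications.

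First I would recall that for a commutative algebra $A$, conditions \eqref{2-zpd} and \eqref{ta} simplify: since $xy=yx$ always, the hypothesis $xy=yx=0$ is just $xy=0$, and a representation $\varphi(x,y)=\tau_1(xy)+\tau_2(yx)=(\tau_1+\tau_2)(xy)$ collapses to $\varphi(x,y)=\tau(xy)$ with $\tau=\tau_1+\tau_2$. Hence ``$A$ is 2-zpd'' and ``$A$ is zpd'' are literally the same statement for commutative $A$; this observation is already made in the paragraph preceding the proposition. Then I would invoke \cite[Theorem 2.15]{zpdbook}, which guarantees that any algebra generated by idempotents is zpd. Combining the two, a commutative algebra generated by idempotents is zpd, hence 2-zpd.

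I do not anticipate any obstacle here: both ingredients are stated in the excerpt, and the only thing to check is the trivial equivalence of the two definitions in the commutative case, which requires no computation beyond observing $xy=yx$. The proof is therefore essentially a one-sentence deduction, and I would write it as such.

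\begin{proof}
Since $A$ is commutative, the implication \eqref{2-zpd} reads $xy=0\implies\varphi(x,y)=0$, and a functional of the form \eqref{ta} is $\varphi(x,y)=\tau_1(xy)+\tau_2(yx)=(\tau_1+\tau_2)(xy)$; conversely $\varphi(x,y)=\tau(xy)$ is of the form \eqref{ta} with $\tau_1=\tau$, $\tau_2=0$. Thus $A$ is 2-zpd if and only if it is zpd. As $A$ is generated by idempotents, it is zpd by \cite[Theorem 2.15]{zpdbook}, and hence 2-zpd.
\end{proof}
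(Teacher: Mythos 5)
Your proof is correct and follows exactly the paper's own reasoning: the paper also derives the proposition from the observation (made in the preceding sentence) that zpd and 2-zpd coincide for commutative algebras, combined with \cite[Theorem 2.15]{zpdbook}. Your write-up just makes explicit the trivial equivalence that the paper calls ``obvious.''
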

 
Let us  remark that the class of commutative algebras that are generated by idempotents is quite special, see \cite{K}.


We continue by stating  \cite[Theorem 3.23]{zpdbook} which provides a closer relation between the zpd and 2-zpd conditions.

\begin{theorem}\label{txyzw}
 Let $A$ be a  zpd algebra.   If a  bilinear functional $\varphi:A\times A\to F$ satisfies  \eqref{2-zpd}, then
\begin{equation} \label{xyzwa}
\varphi(xy,zw)+\varphi(wx,yz)=\varphi(x,yzw) + \varphi(wxy,z) 
\end{equation}
for all $x,y,z,w\in A$.
\end{theorem}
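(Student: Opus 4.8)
Conceptually, \eqref{xyzwa} asserts that the tensor $R:=xy\otimes zw+wx\otimes yz-x\otimes yzw-wxy\otimes z$, which one checks lies in the kernels of both multiplication maps $A\otimes A\to A$, in fact lies in $\linspan\{a\otimes b:ab=ba=0\}$; since the latter subspace is in general strictly smaller than that intersection of kernels, both the zpd hypothesis and the special shape of $R$ have to enter. Concretely, the plan is to strip off two of the four variables by a cocycle argument and then feed a well-chosen bilinear functional into the zpd hypothesis. For the first step, fix $y\in A$ and set $\sigma_y(a,c):=\varphi(ay,c)-\varphi(a,yc)$, which is bilinear in $(a,c)$. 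A direct computation shows that \eqref{xyzwa} is exactly the assertion $\sigma_y(x,zw)=\sigma_y(wx,z)$ for all $x,z,w$. As $A$ is unital, specializing $x=1$ converts this cocycle relation into $\sigma_y(a,c)=\sigma_y(1,ca)$ for all $a,c$, and conversely the latter gives $\sigma_y(x,zw)=\sigma_y(1,zwx)=\sigma_y(wx,z)$. Hence \eqref{xyzwa} holds for all $x,y,z,w$ if and only if
\begin{equation}\label{star}
\varphi(xy,z)+\varphi(1,yzx)=\varphi(x,yz)+\varphi(y,zx)\qquad(x,y,z\in A).
\end{equation}

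To prove \eqref{star}, fix $z\in A$ and put $\chi(x,y):=\varphi(xy,z)-\varphi(x,yz)-\varphi(y,zx)+\varphi(1,yzx)$, a bilinear functional on $A\times A$; one checks immediately that $\chi(x,1)=\chi(1,y)=0$ for all $x,y$. I claim that $\chi$ vanishes on $\{(x,y):xy=0\}$. Granting this, the zpd hypothesis provides a linear functional $\tau$ with $\chi(x,y)=\tau(xy)$; then $\tau(x)=\chi(x,1)=0$, so $\tau=0$ and $\chi\equiv0$, which is \eqref{star}, and hence \eqref{xyzwa}.

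It thus remains to verify that $\chi(x,y)=0$ whenever $xy=0$, and I expect this to be the main obstacle. Since then $\varphi(xy,z)$ vanishes automatically, the assertion reduces to
$$\varphi(x,yz)+\varphi(y,zx)=\varphi(1,yzx)\qquad\text{whenever }xy=0.$$
The difficulty is that \eqref{2-zpd} forces $\varphi(a,b)=0$ only when \emph{both} $ab=0$ and $ba=0$, whereas the pairs $(x,yz)$ and $(zx,y)$ have only \emph{one} of their two products zero: $x(yz)=(xy)z=0$ but $(yz)x=yzx$ need not be zero, and $(zx)y=z(xy)=0$ but $y(zx)=yzx$ need not be zero. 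On the other hand, once $xy=0$ very many products vanish identically — for instance $yzx$ is square-zero, and $(zx)(yz)$, $x(yzx)$ and $(yzx)y$ all vanish — and the plan is to exploit this by building a second auxiliary bilinear functional, in genuinely free variables (obtained, say, by freeing $z$ or introducing a new variable), that vanishes on the whole set $\{ab=0\}$; the zpd hypothesis then applies to it, and combining the resulting identities should yield the displayed equality. Pinning down the correct auxiliary functional and organizing this combination is, I expect, where the real work lies.
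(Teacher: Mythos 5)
First, note that the paper does not prove this statement at all: Theorem \ref{txyzw} is quoted verbatim from \cite[Theorem 3.23]{zpdbook}, so there is no in-paper proof to compare against; your attempt has to stand on its own. Its preparatory reductions are correct and well organized: the equivalence of \eqref{xyzwa} with your identity \eqref{star} via the functionals $\sigma_y$ checks out, as does the final zpd step (since $\chi(x,1)=0$ forces $\tau=0$). But the argument has a genuine, and in fact central, gap exactly where you flag it: you never prove that $\chi(x,y)=0$ whenever $xy=0$, i.e.\ that $\varphi(x,yz)+\varphi(y,zx)=\varphi(1,yzx)$ whenever $xy=0$. The closing paragraph is a statement of intent (``building a second auxiliary bilinear functional \dots should yield the displayed equality''), not a proof, and the specific vanishing products you list ($yzx$ square-zero, $(zx)(yz)=0$, etc.) do not by themselves produce a bilinear functional in two free variables that vanishes on all one-sided zero products. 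Since everything before this point is essentially formal bookkeeping, the missing step is the entire mathematical content of the theorem.

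For what it is worth, the known proof finds the needed auxiliary functionals by pairing the hypothesis \eqref{2-zpd} with a one-sided constraint on a \emph{different} pair of variables: for fixed $y,z$ with $yz=0$, the bilinear functional $(w,x)\mapsto\varphi(xy,zw)$ vanishes whenever $wx=0$, because then $(xy)(zw)=x(yz)w=0$ and $(zw)(xy)=z(wx)y=0$, so \eqref{2-zpd} applies. The zpd hypothesis then gives $\varphi(cy,zwx)=\varphi(xcy,zw)$ for all $c,w,x$ whenever $yz=0$; a second application of the zpd hypothesis, now to $(y,z)\mapsto\varphi(cy,zwx)-\varphi(xcy,zw)$, removes the constraint $yz=0$ and yields \eqref{xyzwa} after specializing $c=1$ and renaming. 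The key idea you are missing is precisely this interplay of two one-sided conditions on two disjoint pairs of variables, which converts the two-sided hypothesis \eqref{2-zpd} into input for the (one-sided) zpd property. Without something of this kind, your reduction to \eqref{star}, however clean, does not constitute a proof.
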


\begin{remark}
If $\varphi$ is symmetric and the characteristic of $F$ is not $2$, then by taking $x=z=1$ in  \eqref{xyzwa} we obtain $$\varphi(y,w) = \frac{1}{2}\varphi(yw+wy,1),$$
 which is a definitive conclusion. 
\end{remark}

Our next goal is to give some equivalent definitions of 2-zpd algebras. 
We first remark that an equivalent way of stating \eqref{ta} is that there exists a linear functional $\tau$ such that
\begin{equation} \label{ta1}\varphi(x,y)-\varphi(xy,1) =\tau([x,y])\end{equation}
for all $x,y\in A$.
Indeed, \eqref{ta} implies \eqref{ta1} with $\tau=-\tau_2$, and the converse is obvious. 

For any algebra $A$, we write
$$(A\otimes A)_0={\rm span}\, \{x\otimes y\,|\,xy=yx=0\}\subseteq A\otimes A,$$
where span stands for the linear span.
As usual, by $[A,A]$ we denote the linear span of all commutators in $A$.

\begin{theorem} \label{teq}The following conditions are equivalent for an algebra $A$:
\begin{enumerate}
\item[{\rm (i)}] $A$ is 2-zpd.
\item[{\rm (ii)}]  If  $\Phi:A\times A\to X$, where $X$ is any vector space over $F$, is a bilinear map with the property that for all $x,y\in A$, $ xy=yx=0$ implies $\Phi(x,y)=0$, then there exist
a linear map $T:[A,A]\to X$ such that $$\Phi(x,y)-\Phi(xy,1) =T([x,y])\quad \mbox{for all $x,y\in A$}.$$
\item[{\rm (iii)}] $A$ is zLpd and for all $z,w\in A$, \begin{equation*} \label{tu} [z,w]=0\implies z\otimes w - zw\otimes 1 \in (A\otimes A)_0.\end{equation*}
\end{enumerate}
\end{theorem}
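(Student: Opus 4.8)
The natural strategy is to prove the cycle (i) $\Rightarrow$ (ii) $\Rightarrow$ (iii) $\Rightarrow$ (i), exploiting the reformulation \eqref{ta1} and the standard linear-algebra duality between bilinear functionals vanishing on $(A\otimes A)_0$ and linear functionals on the quotient $A\otimes A / (A\otimes A)_0$.

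For (i) $\Rightarrow$ (ii): given $\Phi:A\times A\to X$ vanishing whenever $xy=yx=0$, I would compose with an arbitrary functional $\lambda\in X^*$ to get a scalar bilinear functional $\lambda\circ\Phi$ satisfying \eqref{2-zpd}; by (i) and the remark around \eqref{ta1} there is a functional $\tau_\lambda$ with $\lambda(\Phi(x,y)-\Phi(xy,1))=\tau_\lambda([x,y])$. The point is that the left-hand side, as a function of $\lambda$, shows that $\Phi(x,y)-\Phi(xy,1)$ depends only on $[x,y]$: if $[x,y]=[x',y']$ then $\lambda$ annihilates $\Phi(x,y)-\Phi(xy,1)-\Phi(x',y')+\Phi(x'y',1)$ for every $\lambda$, hence this vector is $0$. (In particular, taking $x=y'=1$ and any $y=x'$ shows $\Phi(x,1)=\Phi(1,x)$, so the value at a commutator $[x,y]=xy-yx$ is consistently $\Phi(x,y)-\Phi(xy,1)=\Phi(x,y)-\Phi(yx,1)$ when one also writes it the other way.) This defines $T$ unambiguously on the set of commutators; linearity of $T$ on all of $[A,A]$ then follows because any element of $[A,A]$ is a sum of commutators and the ambiguity argument just made forces additivity of the assignment. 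The main technical care here is checking that $T$ is well defined and linear, not merely defined pointwise on commutators — this is the step I expect to require the most attention.

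For (ii) $\Rightarrow$ (iii): apply (ii) to the universal example $\Phi:A\times A\to (A\otimes A)/(A\otimes A)_0$, $\Phi(x,y)=\overline{x\otimes y}$, which visibly vanishes when $xy=yx=0$. Condition (ii) yields a linear $T:[A,A]\to (A\otimes A)/(A\otimes A)_0$ with $\overline{x\otimes y}-\overline{xy\otimes 1}=T([x,y])$. When $[z,w]=0$ this gives $\overline{z\otimes w}-\overline{zw\otimes 1}=T(0)=0$, i.e. $z\otimes w-zw\otimes 1\in (A\otimes A)_0$, which is the displayed implication. That $A$ is zLpd follows since (ii) applied to scalar-valued $\Phi=\varphi$ satisfying $[x,y]=0\Rightarrow\varphi(x,y)=0$ forces $\varphi(x,y)=\varphi(xy,1)+\tau([x,y])$; restricting to $[x,y]=0$ and to $x=y=1$ pins down the form $\varphi(x,y)=\tau([x,y])$ after absorbing the constant, exactly as in the proof of Proposition \ref{zpzp}.

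For (iii) $\Rightarrow$ (i): let $\varphi$ satisfy \eqref{2-zpd}; I must produce $\tau$ with \eqref{ta1}. Define $\psi(x,y)=\varphi(x,y)-\varphi(xy,1)$. One checks $\psi$ still satisfies \eqref{2-zpd} (since $xy=yx=0$ makes the correction term $\varphi(0,1)=0$). Now I want to show $\psi$ vanishes whenever $[x,y]=0$, so that zLpd produces a $\tau$ with $\psi(x,y)=\tau([x,y])$, which is \eqref{ta1}. Indeed, if $[z,w]=0$ then by the hypothesis $z\otimes w-zw\otimes 1\in (A\otimes A)_0$, so $z\otimes w-zw\otimes 1=\sum_i x_i\otimes y_i$ with $x_iy_i=y_ix_i=0$; pairing with the bilinear functional $\varphi$ gives $\varphi(z,w)-\varphi(zw,1)=\sum_i\varphi(x_i,y_i)=0$, i.e. $\psi(z,w)=0$. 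Hence $\psi$ vanishes on commuting pairs; since $A$ is zLpd, $\psi(x,y)=\tau([x,y])$ for some $\tau$, giving $\varphi(x,y)=\varphi(xy,1)+\tau([x,y])$, and setting $\tau_1:=\varphi(\,\cdot\,,1)+$ (nothing) — more precisely defining $\tau_2=-\tau$ and $\tau_1$ via $\tau_1(u)=\varphi(u,1)$ — yields \eqref{ta}. The only subtlety is the routine verification that $\psi$ inherits \eqref{2-zpd}, which is immediate.
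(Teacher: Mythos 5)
Your proposal is correct and follows essentially the same route as the paper: (i)$\Rightarrow$(ii) by composing $\Phi$ with functionals on $X$, (ii)$\Rightarrow$(iii) via the universal map into $(A\otimes A)/(A\otimes A)_0$, and (iii)$\Rightarrow$(i) by expanding $z\otimes w-zw\otimes 1$ inside $(A\otimes A)_0$ and then invoking zLpd for $\psi(x,y)=\varphi(x,y)-\varphi(xy,1)$. The one step you rightly flag as delicate is handled slightly differently in the paper: rather than checking well-definedness only for two equal commutators $[x,y]=[x',y']$ and then asserting that additivity follows, the paper applies the duality argument directly to an arbitrary vanishing sum $\sum_i[x_i,y_i]=0$, using linearity of each $\tau_\omega$ to get $\sum_i\omega\bigl(\Phi(x_i,y_i)-\Phi(x_iy_i,1)\bigr)=0$ for every $\omega$ and hence $\sum_i\bigl(\Phi(x_i,y_i)-\Phi(x_iy_i,1)\bigr)=0$; this is the form of the statement you actually need to define a linear $T$ on $[A,A]$, and your single-commutator version does not formally imply it, though the identical argument proves it.
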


\begin{proof} (i)$\implies$(ii). Let $\Phi:A\times A\to X$ be a bilinear map  such that $ xy=yx=0$ implies $\Phi(x,y)=0$. Take a linear functional $\omega$ on $X$ and observe that the assumption that $A$ is 2-zpd is applicable to the bilinear functional $\omega\circ \Phi$. Thus, $\omega\circ \Phi$ satisfies \eqref{ta1}, i.e., there exists a linear functional $\tau_\omega$ such that
 $$\omega\bigl (\Phi(x,y)-\Phi(xy,1)\bigr) =\tau_\omega([x,y])$$ for all $x,y\in A$.
 Hence, if  $x_i,y_i$ are any elements in $A$ such that $\sum_i [x_i,y_i]=0$, then
  $$\sum_i \omega\bigl (\Phi(x_i,y_i)-\Phi(x_iy_i,1)\bigr)=0$$
 for every linear functional $\omega$. Therefore, we actually have
   $$\sum_i\Phi(x_i,y_i)-\Phi(x_iy_i,1)=0.$$
   Accordingly, the map 
$T:[A,A]\to X$ given by
$$T\Big( \sum_i [x_i,y_i] \Big)= \sum_i\Phi(x_i,y_i)-\Phi(x_iy_i,1)$$
is well-defined. It is obviously 
linear and so  (ii) is proved.

(ii)$\implies$(iii). Proposition \ref{zpzp} tells us that  $A$ is zLpd if (ii) holds.   Define $\Phi:A\times A\to (A\otimes A)/(A\otimes A)_0$ by 
$$\Phi(x,y) = x\otimes y + (A\otimes A)_0.$$
Clearly, $xy=yx=0$ implies $\Phi(x,y)=0$. Hence, by (ii), for any pair of commuting elements $z,w$ we have  $\Phi(z,w)-\Phi(zw,1)=0$, meaning that  $z\otimes w - zw\otimes 1 \in  (A\otimes A)_0$.

(iii)$\implies$(i).  Let $\varphi:A\times A\to F$ be a bilinear functional satisfying $\varphi(x,y)=0$ whenever $xy=yx=0$. Observe that (iii) implies that for all $z,w\in A$,
$$[z,w]=0\implies 
\varphi(z,w)-\varphi(zw,1)=0.$$ Therefore, we may use the assumption that $A$ is zLpd to the bilinear functional $(z,w)\mapsto \varphi(z,w)-\varphi(zw,1),$ and thus conclude that \eqref{ta1} holds. 
\end{proof}

\begin{remark}
It is easy to see that an algebra $A$ is zLpd if and only if for all $z_i,w_i\in A$ such that $\sum_i [z_i,w_i]=0$, we have
$$\sum_i z_i\otimes w_i \in {\rm span}\,\{x\otimes y\,|\, [x,y]=0\}$$
(see \cite[Proposition 1.3]{zpdbook}). 
Condition (iii) can thus be fully expressed in terms of tensor products.
\end{remark}

We conclude this section by showing that 2-zpd algebras behave well with respect to (finite) direct products.

\begin{proposition}\label{pd}
Algebras $A_1$ and $A_2$ are 2-zpd if and only if  their direct product $A=A_1 \times A_2$ is 2-zpd. 
\end{proposition}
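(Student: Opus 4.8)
The plan is to prove both implications by exploiting the characterization of 2-zpd algebras in terms of bilinear maps into arbitrary vector spaces, namely condition (ii) of Theorem \ref{teq}, which linearizes nicely across a direct product. Throughout, write $A = A_1 \times A_2$, let $e_1 = (1,0)$ and $e_2 = (0,1)$ be the central idempotents, and identify $A_i$ with $e_i A \subseteq A$; note $[A,A] = [A_1,A_1] \oplus [A_2,A_2]$.

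\smallskip
\emph{Suppose $A_1$ and $A_2$ are 2-zpd; we show $A$ is 2-zpd.} Let $\Phi: A \times A \to X$ be bilinear with $\Phi(x,y) = 0$ whenever $xy = yx = 0$. The first observation is that $\Phi$ "decouples" across the two factors: if $x \in A_1$ and $y \in A_2$ (or vice versa), then $xy = yx = 0$, so $\Phi(x,y) = 0$; hence for general $x = x_1 + x_2$, $y = y_1 + y_2$ (with $x_i, y_i \in A_i$) we get $\Phi(x,y) = \Phi(x_1,y_1) + \Phi(x_2,y_2)$. Restricting to $A_i \times A_i$ gives a bilinear map $\Phi_i$ vanishing whenever the product both ways is $0$ \emph{within $A_i$}; since $A_i$ is 2-zpd, condition (ii) of Theorem \ref{teq} supplies a linear map $T_i : [A_i, A_i] \to X$ with $\Phi_i(x_i,y_i) - \Phi_i(x_iy_i, e_i) = T_i([x_i,y_i])$ for all $x_i, y_i \in A_i$. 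Adding these and using the decoupling, together with $\Phi(xy, 1) = \Phi(x_1y_1, e_1) + \Phi(x_2y_2, e_2)$ (here one checks $\Phi(x_1y_1, e_2) = 0$ etc.\ since $x_1y_1 \cdot e_2 = e_2 \cdot x_1 y_1 = 0$), we obtain $\Phi(x,y) - \Phi(xy,1) = T([x,y])$ where $T = T_1 \oplus T_2 : [A,A] \to X$. By Theorem \ref{teq}, $A$ is 2-zpd.

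\smallskip
\emph{Suppose $A = A_1 \times A_2$ is 2-zpd; we show each $A_i$ is 2-zpd.} Fix $i$, say $i = 1$, and let $\Phi_1 : A_1 \times A_1 \to X$ be bilinear with $\Phi_1(x,y) = 0$ whenever $xy = yx = 0$ in $A_1$. Extend $\Phi_1$ to $\Phi : A \times A \to X$ by $\Phi(x,y) = \Phi_1(e_1 x, e_1 y)$, i.e.\ project onto the first coordinate and apply $\Phi_1$; this is bilinear. If $xy = yx = 0$ in $A$, then $(e_1x)(e_1 y) = (e_1 y)(e_1 x) = 0$ in $A_1$, so $\Phi(x,y) = 0$. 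Since $A$ is 2-zpd, Theorem \ref{teq}(ii) gives $T : [A,A] \to X$ with $\Phi(x,y) - \Phi(xy,1) = T([x,y])$ for all $x, y \in A$. Now restrict to $x, y \in A_1 = e_1 A$: then $e_1 x = x$, $e_1 y = y$, $\Phi(x,y) = \Phi_1(x,y)$, $\Phi(xy, 1) = \Phi_1(xy, e_1)$, and $[x,y] \in [A_1,A_1]$, so setting $T_1 = T|_{[A_1,A_1]}$ yields $\Phi_1(x,y) - \Phi_1(xy, e_1) = T_1([x,y])$. By Theorem \ref{teq} again, $A_1$ is 2-zpd; symmetrically for $A_2$.

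\smallskip
I do not expect a genuine obstacle here — the argument is entirely formal. The only points requiring a little care are the bookkeeping with the central idempotents (verifying the various "cross terms" $\Phi(\cdot,\cdot)$ vanish because the relevant products are zero on both sides) and making sure the unit $e_i$ of $A_i$, rather than the unit $1$ of $A$, appears in the correct places when invoking condition (ii); using the vector-space-valued formulation (ii) rather than the functional version (i) is what makes the direct-sum decomposition of $T$ transparent. One could alternatively argue directly with bilinear functionals and the form \eqref{ta}, but the map-valued version avoids having to reassemble functionals $\tau_1, \tau_2$ from pieces.
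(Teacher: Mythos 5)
Your proof is correct and follows essentially the same route as the paper's: both arguments hinge on the central idempotents $e_1,e_2$, on the vanishing of the cross terms $\Phi(x_1,y_2)$ and $\Phi(x_i y_i, e_j)$ (because the relevant products are zero on both sides), and, for the converse, on extending a form on $A_1$ to $A$ via the projection. The only difference is cosmetic: the paper works directly from the definition with functionals $\tau_1,\tau_2$ and the form \eqref{ta}, while you route everything through the equivalent vector-space-valued condition (ii) of Theorem \ref{teq} and the form \eqref{ta1}, which makes your proof depend on that (nontrivial but already established) equivalence.
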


\begin{proof}
Suppose $A_1$ and $A_2$ are 2-zpd and let   $\varphi:A\times A\to F$ be a bilinear functional such that for all $x,y\in A$,
$xy=yx=0$ implies $\varphi(x,y)=0$. Define a bilinear functional  $\varphi_1:A_1\times A_1\to F$
by $$\varphi_1(x_1,y_1)=\varphi\big((x_1,0),(y_1,0)\big).$$
Observe that $x_1y_1=y_1x_1=0$ implies $\varphi_1(x_1,y_1)=0$. Since $A_1$ is 2-zpd, there exist linear functionals $\tau_{11}$ and $\tau_{12}$ on $A_1$ such that
$$\varphi\big((x_1,0),(y_1,0)\big)=\varphi_1(x_1,y_1)=\tau_{11}(x_1y_1) + \tau_{12}(y_1x_1)$$
for all $x_1,y_1\in A_1$. Similarly we see that there exist linear functionals $\tau_{21}$ and $\tau_{22}$ on $A_2$ such that
$$\varphi\big((0,x_2),(0,y_2)\big)=\tau_{21}(x_2y_2) + \tau_{22}(y_2x_2)$$
for all $x_2,y_2\in A_2$. Define linear functionals $\tau_1$ and $\tau_2$ on $A$ by
$$\tau_1\big((x_1,x_2)\big)=\tau_{11}(x_1) + \tau_{21}(x_2)\quad\mbox{and}\quad \tau_2\big((x_1,x_2)\big)=\tau_{12}(x_1) + \tau_{22}(x_2).$$
Take arbitrary $(x_1,x_2),(y_1,y_2)\in A$.
Observe that our assumption on $\varphi$ implies that  $$\varphi\big((x_1,0), (0,y_2)\big) = \varphi\big((0,x_2), (y_1,0)\big)  =0.$$
Hence,
 \begin{align*}\varphi\big((x_1,x_2), (y_1,y_2)\big) =& \varphi\big((x_1,0), (y_1,0)\big)  + \varphi\big((0,x_2), (0,y_2)\big)  \\
 =& \tau_{11}(x_1y_1) + \tau_{12}(y_1x_1) + \tau_{21}(x_2y_2) + \tau_{22}(y_2x_2)\\
 =&\tau_1\big((x_1,x_2)(y_1,y_2)\big) + \tau_2\big((y_1,y_2)(x_1,x_2)\big),
 \end{align*}
which proves that $A$ is 2-zpd.

Assume now that $A$ is 2-zpd and take a bilinear functional $\varphi_1$ on $A_1$ satisfying $\varphi_1(x_1,y_1)=0$ whenever $x_1y_1=y_1x_1=0$.  Define $\varphi:A\times A\to F$ by $$\varphi\big((x_1,x_2), (y_1,y_2)\big) = \varphi_1(x_1,y_1).$$
Observe that $\varphi$ is bilinear and satisfies  $\varphi(x,y)=0$ whenever $xy=yx=0$. Therefore, there are linear functionals $\tau_1$ and $\tau_2$ on $A$ such that $\varphi(x,y)=\tau_1(xy) + \tau_2(yx)$ for all $x,y\in A$. Hence,
$$\varphi_1(x_1,y_1) = \varphi\big((x_1,0), (y_1,0)\big) = \tau_1\big((x_1y_1,0)\big) + \tau_2\big((y_1x_1,0)\big),$$
which proves that $A_1$ is 2-zpd. Similarly we see that $A_2$ is 2-zpd.
\end{proof}

The direct product of infinitely many commutative zpd algebras does not need to be zpd \cite[Example 2.24]{zpdbook}. Since the notions of zpd and 2-zpd algebras coincide in the commutative setting, this shows that Proposition \ref{pd} does not hold for infinite direct products.

\section{Connections with derivations}\label{s3}

Let $A$ be an algebra and $M$ be an $A$-bimodule. 
Recall that a linear 
map $\delta:A\to M$ is called a {\em derivation} if $\delta(xy)=\delta(x)y + x\delta(y)$ for all $x,y\in A$. Any fixed element $m\in M$ gives rise to a derivation defined by $x\mapsto xm -mx $. Such a derivation is said to be {\em inner}.

Let $A^*$ denote the dual space of $A$.  It is well known that  $A^*$ becomes an $A$-bimodule by defining 
$$(x\cdot f)(y) =f(yx)\quad\mbox{and}\quad (f\cdot x)(y) =f(xy)$$
for $f\in A^*$, $x,y\in A$. We can therefore consider derivations from $A$ to $A^*$. 

The following theorem is 
an algebraic analog of \cite[Theorem 6.5]{zpdbook}.
\begin{theorem}\label{ccc}
Let $A$ be a zpd algebra.  If every derivation from $A$ to $A^*$ is inner, then $A$ is 2-zpd.
\end{theorem}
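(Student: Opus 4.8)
The plan is to take an arbitrary bilinear functional $\varphi:A\times A\to F$ satisfying \eqref{2-zpd} and to show that \eqref{ta1} holds for some linear functional $\tau$; as observed before Theorem \ref{teq}, this is equivalent to $\varphi$ having the form \eqref{ta}, so it is all we need. Since $A$ is zpd, Theorem \ref{txyzw} applies and tells us that $\varphi$ satisfies \eqref{xyzwa} for all $x,y,z,w\in A$. The idea is to recognize a consequence of \eqref{xyzwa} as the statement that a suitable linear map $A\to A^*$ is a derivation, and then to invoke the hypothesis that every derivation from $A$ to $A^*$ is inner.

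The map I would use is $\delta:A\to A^*$ given by $\delta(x)(y)=\varphi(y,x)-\varphi(yx,1)$, which is plainly linear in $x$. The heart of the argument is to verify that $\delta$ is a derivation for the bimodule structure on $A^*$ recalled above, i.e., that $\delta(xy)=\delta(x)\cdot y+x\cdot\delta(y)$. Evaluating both sides at an arbitrary $z\in A$ and unwinding the two module actions, the required identity reads
$$\varphi(z,xy)-\varphi(zxy,1)=\varphi(yz,x)+\varphi(zx,y)-\varphi(yzx,1)-\varphi(zxy,1),$$
and after cancelling $\varphi(zxy,1)$ this is precisely the special case of \eqref{xyzwa} obtained by the substitution $(x,y,z,w)\mapsto(z,x,1,y)$. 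Hence $\delta$ is a derivation. One should also note that $A^*$ is a unital $A$-bimodule, so the notion of inner derivation in the hypothesis applies to $\delta$ without reservation.

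By hypothesis $\delta$ is inner, so there is $f\in A^*$ with $\delta(x)=x\cdot f-f\cdot x$ for all $x\in A$. Evaluating at $y$ and using the definition of the module actions gives
$$\varphi(y,x)-\varphi(yx,1)=f(yx)-f(xy)=f([y,x])$$
for all $x,y\in A$, and interchanging $x$ and $y$ yields $\varphi(x,y)-\varphi(xy,1)=f([x,y])$. So \eqref{ta1} holds with $\tau=f$, and $A$ is 2-zpd.

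I do not anticipate a real obstacle: once \eqref{xyzwa} is available, everything reduces to one short verification. The only genuine content is finding the right derivation $\delta$ — in particular the corrective summand $-\varphi(yx,1)$, which is exactly what makes the $\varphi(zxy,1)$ contributions cancel in the derivation identity above.
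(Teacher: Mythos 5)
Your proposal is correct and follows essentially the same route as the paper: both define $\delta:A\to A^*$ by $\delta(x)(y)=\varphi(y,x)-\varphi(yx,1)$, verify the derivation identity via the $z=1$ specialization of \eqref{xyzwa} supplied by Theorem \ref{txyzw}, and then read off \eqref{ta1} from the innerness of $\delta$. The only cosmetic difference is that the paper first isolates the identity $\psi(x,yw)=\psi(wx,y)+\psi(xy,w)$ before interpreting it as the derivation property, whereas you substitute directly into \eqref{xyzwa}.
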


\begin{proof}
Let $\varphi:A\times A\to F$ be a bilinear functional   satisfying  \eqref{2-zpd}. 
 Define $\psi:A\times A\to F$ by
$$\psi(x,y)=\varphi(x,y) - \varphi(xy,1).$$
Then 
\begin{align}
\psi(xy,w)=\varphi(xy,w)-\varphi(xyw,1),\nonumber\\
\label{xyzwb} \psi(wx,y)=\varphi(wx,y)-\varphi(wxy,1),\\
\psi(x,yw) = \varphi(x,yw) - \varphi(xyw,1).\nonumber
\end{align}
By Theorem \ref{txyzw}, $\varphi$ satisfies \eqref{xyzwa}. In particular, 
\begin{equation*} 
\varphi(xy,w)+\varphi(wx,y) =\varphi(x,yw) + \varphi(wxy,1),
\end{equation*}
which together with \eqref{xyzwb} gives
\begin{equation}\label{der}\psi(x,yw) = \psi(wx,y) + \psi(xy,w)\end{equation}
for all $x,y,w\in A$.

Define $\delta:A\to A^*$
by
$$\delta(y)(x)=\psi(x,y).$$
Using \eqref{der}, we obtain
\begin{align*}\delta(yw)(x)&= \psi(x,yw)= \psi(wx,y) + \psi(xy,w) \\
&= \delta(y)(wx) + \delta(w)(xy) \\&= \big(\delta(y)\cdot w + y\cdot \delta(w)\big)(x),\end{align*}
which shows that $\delta$ is a derivation. By our assumption, there exists a $\tau\in A^*$ such that $\delta(y)=y\cdot \tau-\tau\cdot y $ for all $y\in A$. This means that $\psi(x,y)=\tau(xy-xy)$ for all $x,y\in A$, and so $\varphi$ satisfies  \eqref{ta1}.
\end{proof}

The condition that a derivation from $A$ to $A ^*$ is inner can be equivalently stated as that the  Hochschild cohomology group $H^1(A, A^*)$ is trivial, which in turn is equivalent to the triviality of the 
Hochschild homology group $H_1(A, A)$. Every finite-dimensional algebra of finite global dimension has this property \cite[Proposition 6]{Han}.
A simple concrete  example is the algebra $T_n(F)$ of all upper triangular matrices over $F$, which is also zpd \cite[Example 2.19]{zpdbook}. 
The following corollary therefore holds.

\begin{corollary} The algebra $T_n(F)$ is 2-zpd.\end{corollary}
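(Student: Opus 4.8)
The statement to prove is the Corollary that $T_n(F)$ is 2-zpd, and the plan is essentially to invoke Theorem~\ref{ccc}, so I need to verify its two hypotheses: that $T_n(F)$ is zpd, and that every derivation from $T_n(F)$ to its dual is inner.

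The paper already points out that $T_n(F)$ is zpd by \cite[Example 2.19]{zpdbook}, so that hypothesis is free. For the second hypothesis, the paper hints at two routes: either show directly that every derivation $T_n(F)\to T_n(F)^*$ is inner, or use the cohomological reformulation — that $H^1(T_n(F),T_n(F)^*)\cong H_1(T_n(F),T_n(F))$ vanishes — together with the fact, cited from \cite[Proposition 6]{Han}, that a finite-dimensional algebra of finite global dimension has this property. So the key remaining point is that $T_n(F)$ has finite global dimension. This is classical: $T_n(F)$ is a hereditary algebra (in fact its global dimension is $1$ for $n\geq 2$, and $0$ for $n=1$), being isomorphic to the path algebra of the linear quiver $A_n$. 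One can see finiteness of the global dimension quite concretely by noting that $T_n(F)$ is a directed/triangular algebra with a complete set of orthogonal primitive idempotents whose associated quiver has no oriented cycles.

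So the proof I would write is short: First, recall that $T_n(F)$ is zpd by \cite[Example 2.19]{zpdbook}. Second, observe that $T_n(F)$, being the path algebra of a quiver of type $A_n$ without relations (equivalently, a hereditary finite-dimensional algebra), has finite global dimension, so by \cite[Proposition 6]{Han} the homology group $H_1(T_n(F),T_n(F))$ is trivial; equivalently $H^1(T_n(F),T_n(F)^*)=0$, which says exactly that every derivation from $T_n(F)$ to $T_n(F)^*$ is inner. Third, apply Theorem~\ref{ccc} to conclude that $T_n(F)$ is 2-zpd.

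The main obstacle, such as it is, is simply pinning down cleanly why $T_n(F)$ has finite global dimension (or, if one prefers to avoid quoting quiver theory, proving directly that every derivation $T_n(F)\to T_n(F)^*$ is inner). The direct approach is feasible but a little tedious: one would use that $T_n(F)$ is generated as an algebra by the matrix units $e_{ii}$ and $e_{i,i+1}$, write a derivation $\delta$ in terms of its values on these generators, adjust $\delta$ by an inner derivation to make $\delta(e_{ii})=0$ for all $i$ (using that the $e_{ii}$ are orthogonal idempotents summing to $1$), and then check that the remaining conditions force $\delta$ to vanish on the generators $e_{i,i+1}$ as well, hence $\delta=0$. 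Quoting \cite{Han} together with the standard fact that hereditary algebras have global dimension at most $1$ is the cleaner route, and is the one the surrounding text already sets up.

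\begin{proof}
By \cite[Example 2.19]{zpdbook}, $T_n(F)$ is zpd. Moreover, $T_n(F)$ is a hereditary finite-dimensional algebra; indeed, it is isomorphic to the path algebra of the quiver of type $A_n$ with linear orientation, which has no oriented cycles, so $T_n(F)$ has finite global dimension. Hence, by \cite[Proposition 6]{Han}, the Hochschild homology group $H_1(T_n(F),T_n(F))$ is trivial, equivalently $H^1(T_n(F),T_n(F)^*)$ is trivial, which means that every derivation from $T_n(F)$ to $T_n(F)^*$ is inner. Theorem \ref{ccc} now yields that $T_n(F)$ is 2-zpd.
\end{proof}
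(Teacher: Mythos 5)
Your proposal is correct and follows exactly the route the paper takes: combining the fact that $T_n(F)$ is zpd with Theorem \ref{ccc}, after using \cite[Proposition 6]{Han} and the finite global dimension of $T_n(F)$ to conclude that every derivation into the dual is inner. The only difference is that you spell out why $T_n(F)$ has finite global dimension (it is hereditary, being the path algebra of a linearly oriented $A_n$ quiver), a point the paper leaves implicit.
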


Theorem \ref{ccc} gives a sufficient  
condition for an algebra $A$ to be 2-zpd. The next theorem  presents a necessary condition
which  also involves derivations, but is 
 of a different nature.





\begin{theorem}\label{we}
Let $A$ be an algebra, let $M$ be an $A$-bimodule, and let  $$\Theta = {\rm span}\, \{am\,|\, a\in A, m\in M, ama=0\}.$$
If $A$ is 2-zpd, then  every derivation $\delta:A\to M$ satisfies 
$\delta(A)\subseteq \Theta$.
\end{theorem}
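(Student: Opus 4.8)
The plan is to produce, from a derivation $\delta : A \to M$, a bilinear functional satisfying the hypothesis \eqref{2-zpd}, apply the 2-zpd property to conclude it has the form \eqref{ta}, and then read off that $\delta(A)$ lands in $\Theta$. First I would test whether $\delta(x) \in \Theta$ for a fixed $x \in A$. The natural object to feed into the definition is a functional that ``detects'' $\delta(x)$ modulo $\Theta$: take any linear functional $\omega$ on $M$ that vanishes on $\Theta$, and consider $\varphi : A \times A \to F$ defined by $\varphi(x,y) = \omega\bigl(x\,\delta(y)\bigr)$ (or perhaps $\omega(\delta(x)y)$ — one of the two one-sided multiplications; I expect the derivation identity will force a specific choice). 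The point is that since $\omega$ kills $\Theta$, whenever $xy = yx = 0$ we should be able to show $x\,\delta(y)$, suitably manipulated, is a sum of elements of the form $ama$ with $ama=0$, hence $\varphi(x,y) = 0$; here the identity $\delta(xy) = \delta(x)y + x\delta(y)$ together with $xy=0$ gives $x\delta(y) = -\delta(x)y$, and one uses $yx=0$ to symmetrize. The precise combinatorics — writing $x\delta(y)$ as (a multiple of) something annihilated on both sides by a common element — is the one routine computation I would actually carry out.

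Granting that $\varphi$ satisfies \eqref{2-zpd}, the 2-zpd hypothesis gives linear functionals $\tau_1, \tau_2$ on $A$ with $\omega\bigl(x\,\delta(y)\bigr) = \tau_1(xy) + \tau_2(yx)$ for all $x,y$. Now specialize: put $y = 1$ to get $\omega(x\,\delta(1)) = (\tau_1 + \tau_2)(x)$; but $\delta(1) = 0$, so $\tau_2 = -\tau_1$, and hence $\omega\bigl(x\,\delta(y)\bigr) = \tau_1\bigl([x,y]\bigr)$ for all $x,y \in A$. Setting $x = 1$ yields $\omega(\delta(y)) = \tau_1\bigl([1,y]\bigr) = \tau_1(0) = 0$ for every $y \in A$. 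Thus every linear functional $\omega$ vanishing on $\Theta$ also vanishes on $\delta(A)$, which forces $\delta(A) \subseteq \Theta$ (using that $\Theta$, as the annihilator of an annihilator, equals its own double annihilator — automatic in the purely linear-algebraic sense: a vector lying outside a subspace is separated from it by a functional).

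The main obstacle I anticipate is the very first step: confirming that $\varphi$ really satisfies \eqref{2-zpd}, i.e., that $xy = yx = 0$ implies $x\,\delta(y) \in \Theta$ (equivalently that it is killed by every $\omega$ annihilating $\Theta$). The cleanest route is probably to show directly that $x\,\delta(y)$ is, up to sign, of the form $a m a$ with $ama = 0$: from $xy = 0$ we get $x\delta(y) = -\delta(x) y$; I would then look for the right element $a$ built from $x$ and $y$ (the presence of \emph{both} $xy=0$ and $yx=0$ suggests something like $a = x+y$ or working inside the subalgebra they generate, where $x$ and $y$ are ``square-zero-like'' relative to each other). If a single product $ama$ does not suffice, a short sum of such terms will, which is exactly why $\Theta$ is defined as a \emph{span}. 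Once that membership is established, everything downstream is the formal specialization argument above, with no further difficulty; in particular no appeal to Theorem~\ref{ccc} or to the zpd hypothesis is needed — only the defining property of 2-zpd algebras via \eqref{ta}, and the observation $\delta(1)=0$.
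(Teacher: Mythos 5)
Your proof is correct, and it takes a different route from the paper's. The one step you deferred closes immediately and with a single term of the span: take $a=x$ and $m=\delta(y)$; from $xy=0$ the derivation identity gives $x\delta(y)=-\delta(x)y$, hence $x\delta(y)x=-\delta(x)yx=0$ by $yx=0$, so $x\delta(y)=am$ with $ama=0$ and thus $x\delta(y)\in\Theta$ (no sum and no auxiliary element such as $x+y$ is needed). With that in hand, your argument --- apply the 2-zpd definition to $\varphi(x,y)=\omega(x\delta(y))$ for each functional $\omega$ annihilating $\Theta$, specialize at $y=1$ to force $\tau_2=-\tau_1$, then at $x=1$ to get $\omega(\delta(y))=0$, and separate --- is complete and uses only the definition \eqref{ta} plus $\delta(1)=0$. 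The paper instead invokes Theorem \ref{teq}\,(iii) to write $1\otimes w-w\otimes 1=\sum_i x_i\otimes y_i$ with $x_iy_i=y_ix_i=0$, applies ${\rm id}_A\otimes\delta$ and multiplication to obtain the explicit decomposition $\delta(w)=\sum_i x_i\delta(y_i)$, and then runs the same $x_i\delta(y_i)x_i=0$ computation. The two approaches are dual to one another: yours is more self-contained (it bypasses Theorem \ref{teq} entirely and works one functional at a time), while the paper's yields a concrete formula for $\delta(w)$ as a sum of elements $x_i\delta(y_i)$ governed by the two-sided zero products $x_iy_i=y_ix_i=0$, which is reused elsewhere (e.g., in the remark expressing inner derivations as $\sum_i L_{x_i}R_{y_i}$).
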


\begin{proof}Take $w\in A$. By Theorem \ref{teq}\,(iii), there exist $x_i,y_i\in A$ such that $x_iy_i=y_ix_i=0$ for every $i$ and
\begin{equation}\label{too}1\otimes w - w\otimes 1=\sum_i x_i\otimes y_i.\end{equation}
Observe that $\delta(1)=0$. Therefore,
applying ${\rm id}_A \otimes \delta$ to \eqref{too} we obtain 
$$1\otimes \delta(w)=\sum_i x_i\otimes \delta(y_i),$$
and so, in particular, 
$$\delta(w)=\sum_i x_i\delta(y_i).$$
From $x_iy_i=0$ it follows that $\delta(x_i)y_i+x_i\delta(y_i)=0$, which along with $y_ix_i=0$ gives $x_i\delta(y_i)x_i=0$. Therefore, $x_i\delta(y_i)\in\Theta$, and hence $\delta(w)\in\Theta$.
\end{proof}

\begin{corollary}\label{me}
If $A$ is a 2-zpd algebra, then every derivation $\delta: A\to A$ has image in the linear span of elements whose square is $0$.
\end{corollary}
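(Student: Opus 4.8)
The plan is to derive this immediately from Theorem~\ref{we}. I would apply that theorem with $M=A$, regarded as a bimodule over itself via left and right multiplication (this is a unital bimodule since $A$ has unity, so derivations $\delta\colon A\to M$ are exactly the derivations $\delta\colon A\to A$). With this choice, the subspace appearing in Theorem~\ref{we} becomes
$$\Theta = {\rm span}\,\{am \mid a,m\in A,\ ama=0\},$$
and Theorem~\ref{we} gives $\delta(A)\subseteq\Theta$ for every derivation $\delta\colon A\to A$. So it remains only to identify $\Theta$ with the linear span $S$ of the set of square-zero elements of $A$.

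For the inclusion $\Theta\subseteq S$, I would observe that whenever $ama=0$ one has $(am)^2=(ama)m=0$, so every generator $am$ of $\Theta$ is itself square-zero. For the reverse inclusion $S\subseteq\Theta$, given $s\in A$ with $s^2=0$, take $a=s$ and $m=1$; then $ama=s^2=0$ while $am=s$, so $s\in\Theta$. Hence $\Theta=S$, and combining this with $\delta(A)\subseteq\Theta$ yields the corollary.

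Since the argument is this short, there is no genuine obstacle; the only point worth isolating is the elementary identity $(am)^2=(ama)m$, which is what turns the condition $ama=0$ from Theorem~\ref{we} into the square-zero condition in the statement.
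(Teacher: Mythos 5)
Your proposal is correct and matches the paper's proof, which likewise applies Theorem~\ref{we} with $M=A$ and observes that $ama=0$ implies $(am)^2=0$. The only difference is that you also verify the reverse inclusion $S\subseteq\Theta$, which is true but not needed for the corollary.
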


\begin{proof}Observe that $ama=0$ implies $(am)^2=0$.
\end{proof}

\begin{remark}
More generally, every derivation from a 2-zpd algebra $A$ 
to an algebra $M\supseteq A$ has image in  the linear span of square-zero elements in $M$. We have restricted to derivations from $A$ to $A$ since this  will be needed in the proof of Theorem \ref{mt}.\end{remark}

\begin{remark}
If $A$ is a zpd algebra, then every commutator in $A$ is a sum of square-zero elements \cite[Theorem 9.1]{zpdbook}. That is to say, the image of every inner derivation from $A$ to $A$ lies in the linear span of square-zero elements. Corollary \ref{me} states that if $A$ is 2-zpd, then this holds for all, not only inner,  derivations of $A$. In the proof of Theorem \ref{mt} we will see that this is not true if $A$ is only assumed to be zpd.  
\end{remark}

\begin{remark} We can say more 
 about inner derivations from a 2-zpd algebra $A$ to itself. Denoting by $L_x$ (resp.\ $R_y$) the left (resp.\ right) multiplication map, that is, $L_x(u)= xu$ and $R_y(u)=uy$, then every inner derivation $u\mapsto [u,w]$ can be written as $\sum_i L_{x_i}R_{y_i}$ where $x_i,y_i\in A$ are such that $x_iy_i=y_ix_i=0$ for every $i$. This follows immediately from \eqref{too}. If $A$ is only zpd, we can write every inner derivation as $\sum_i L_{x_i}R_{y_i}$ where either
 $x_iy_i=0$ for every $i$ or $y_ix_i=0$ for every $i$ \cite[Remark 9.3]{zpdbook}. 
\end{remark}

We can now show that the converse to Proposition \ref{zpzp} is not true. The following example is a refinement of \cite[Example 3.28]{zpdbook}.

\begin{example} \label{ezz}  Let
 $A_0$ be any algebra that is not equal to the linear span of its square-zero elements, and let $C$ be an algebra generated by a nilpotent element (i.e.,  $C\cong F[X]/(X^n)$ for some $n\ge 2$).
 We claim that the algebra
$$A=A_0\otimes C$$
is not 2-zpd. To prove this, note first that every element $a\in A$ can be uniquely written as $a=a_0+ a_1 u+\dots + a_{n-1}u^{n-1}$ where $a_i\in A_0 (=A_0\otimes 1)$ and $u\in C$ is a  nilpotent element of index $n$. Consider the ideal $M=Au$ of $A$ as an $A$-bimodule and define
$\delta:A \to M$ by $$\delta\left(\sum_{k=0}^{n-1}a_ku^k\right) = \sum_{k=1}^{n-1} ka_ku^k.$$
 Observe that $\delta$ is a derivation from $A$ to $M$. Take $a_1 \in A_0$ that does not lie in the linear span of square-zero elements of $A_0$. If $A$ was 2-zpd, then by Theorem \ref{we} there would exist $x_{ik},  y_{i\ell}\in A_0$ such that
$$\delta(a_1u)=a_1 u = \sum_i \left(\sum_{k=0}^{n-1} x_{ik} u ^k\right) \left(\sum_{\ell=1}^{n-1} y_{i\ell} u ^\ell \right) $$ and 
$$\left(\sum_{k=0}^{n-1} x_{ik} u ^k\right)   \left(\sum_{\ell=1}^{n-1} y_{i\ell} u ^\ell \right)  \left(\sum_{k=0}^{n-1} x_{ik} u ^k\right) =0$$
for every $i$. Observe that this yields $a_1 =  \sum_i x_{i0}y_{i1}$ and $x_{i0}y_{i1} x_{i0} =0$ for every $i$. The latter condition implies that each $x_{i0}y_{i1}$ has square zero, which contradicts our assumption on $a_1$.

Taking the matrix algebra $M_m(F)$, $m\ge 2$, for $A_0$ we see that the algebra $M_m(C)$ is not 2-zpd. However, this algebra is both 
zpd and zLpd \cite[Corollary 2.17\,(a) and Corollary 3.11]{zpdbook}. 
\end{example}

\section{Finite-dimensional  simple algebras}\label{s4}


Recall that an algebra $A$  said to be {\em separable} if $A\otimes A$ contains an element $\sum_i p_i\otimes q_i$ such that 
$\sum_i p_iq_i =1$ and 
$\sum xp_i\otimes q_i = \sum p_i\otimes q_ix$ for every $x\in A$. This is equivalent to the condition that $A$ is  the direct product of finitely many  matrix algebras
$M_n(D)$, where $n\ge 1$ and $D$ is a  
finite-dimensional division algebra  whose center  is a finite separable field extension of $F$. 
Thus, a separable algebra is finite-dimensional and semisimple, and if $F$ is a perfect field,  the converse is also true.
 Since central simple algebras are separable,  the following theorem generalizes the main result of \cite{KLZ}.




\begin{theorem} \label{mt}Let $A$ be a  finite-dimensional simple algebra that is not a division algebra. Then $A$ is 2-zpd if and only if $A$ is separable.
\end{theorem}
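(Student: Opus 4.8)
The plan is to prove the two implications separately, both through the derivation-theoretic results of Section~\ref{s3}. Since $A$ is a finite-dimensional simple algebra that is not a division algebra, the Wedderburn--Artin theorem gives $A\cong M_{n}(D)$ with $D$ a finite-dimensional division algebra and $n\ge 2$; write $K=Z(D)=Z(A)$. By the description of separable algebras recalled just before the theorem, $A$ is separable precisely when $K/F$ is a separable field extension, and recall that matrix algebras $M_{n}(B)$ with $n\ge 2$ are always zpd \cite[Corollary 2.17(a)]{zpdbook}. For the ``if'' direction I would argue: if $A$ is separable then it is a direct summand of its enveloping algebra, hence $H^{1}(A,M)=0$ for every $A$-bimodule $M$; in particular every derivation from $A$ to $A^{*}$ is inner, and since $A$ is also zpd, Theorem~\ref{ccc} shows that $A$ is 2-zpd.

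For the converse, suppose $A$ is 2-zpd. By Corollary~\ref{me} the image of every derivation $A\to A$ lies in the span $V$ of the square-zero elements of $A$; a square-zero element is nilpotent and hence has reduced trace $0$, so $V\subseteq\ker(\mathrm{Trd}_{A/K})$, a \emph{proper} $F$-subspace of $A$. Suppose, for contradiction, that $K/F$ is not separable; then $F$ has characteristic $p>0$ and there is a nonzero $F$-derivation $d$ of $K$. The idea is to extend $d$ to an $F$-derivation $\delta$ of $A$ for which $\mathrm{Trd}_{A/K}$ still detects $d$, so that $\delta(A)$ contains an element of nonzero reduced trace, contradicting $\delta(A)\subseteq V$.

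Two things must be checked. First, $d$ extends to an $F$-derivation of $A$: choosing a $K$-basis of $A$ containing $1$ and extending $d$ coordinatewise produces an $F$-linear map $\delta_{0}\colon A\to A$ with $\delta_{0}(1)=0$, $\delta_{0}|_{K}=d$, and $\delta_{0}(\lambda x)=d(\lambda)x+\lambda\delta_{0}(x)$ for $\lambda\in K$; then the defect $(x,y)\mapsto\delta_{0}(xy)-\delta_{0}(x)y-x\delta_{0}(y)$ is a $K$-bilinear Hochschild $2$-cocycle of $A$ with coefficients in $A$, which is a coboundary because $H^{2}_{K}(A,A)=0$ (as $A$ is separable over $K$), and correcting $\delta_{0}$ by the corresponding $K$-linear map, which kills $1$ and hence $K$, gives the desired $\delta$ with $\delta|_{K}=d$. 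Second, $\mathrm{Trd}_{A/K}\circ\delta=d\circ\mathrm{Trd}_{A/K}$: pass to a finite Galois splitting field $L/K$, so $A\otimes_{K}L\cong M_{m}(L)$ with $m=\deg A$; the derivation $d$ extends uniquely (since $L/K$ is separable) to an $F$-derivation $d^{L}$ of $L$, and $a\otimes\ell\mapsto\delta(a)\otimes\ell+a\otimes d^{L}(\ell)$ is a well-defined $F$-derivation of $A\otimes_{K}L$ extending $d^{L}$, precisely because $\delta|_{K}=d$. On $M_{m}(L)$ every $F$-derivation extending $d^{L}$ differs from the entrywise one by an $L$-linear derivation, hence by an inner one (every derivation of a central simple algebra over its center is inner), and therefore commutes with the ordinary trace; restricting to $A\otimes 1$ and using that the reduced trace is compatible with base change yields the stated identity. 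Since $\mathrm{Trd}_{A/K}$ is surjective onto $K$ and $d\ne 0$, the composite $\mathrm{Trd}_{A/K}\circ\delta=d\circ\mathrm{Trd}_{A/K}$ is nonzero; picking $a$ with $\mathrm{Trd}_{A/K}(\delta(a))\ne 0$ gives $\delta(a)\notin V$, a contradiction. Hence $K/F$ is separable and $A$ is separable.

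The main obstacle, I expect, is exactly the identity $\mathrm{Trd}_{A/K}\circ\delta=d\circ\mathrm{Trd}_{A/K}$ (equivalently, arranging a derivation of $A$ that is ``visible'' to the reduced trace): when $p$ divides $\deg A$ the reduced trace is not pinned down by its elementary formal properties, so the splitting-field reduction above genuinely seems to be needed. A possible alternative, which trades this for the fact that the 2-zpd property is inherited by finite scalar extensions, is to pick $\beta\in K\setminus F$ with $\beta^{p}\in F$; then $A\otimes_{F}F(\beta)\cong M_{n}(D)\otimes_{F}F[t]/(t^{p})$, which is not 2-zpd by Example~\ref{ezz} (note $M_{n}(D)$ is again not the span of its square-zero elements, by the reduced trace), so the work would instead be to establish that scalar-extension stability.
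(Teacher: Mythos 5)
Your proof is correct, and the ``if'' direction is essentially the paper's own argument: separability gives $H^1(A,M)=0$ for every bimodule, in particular every derivation into $A^*$ is inner, and Theorem \ref{ccc} applies because $M_n(D)$ with $n\ge 2$ is zpd. (The paper also records a second, derivation-free computation with the separability idempotent, but that is an optional extra.) The converse is where you genuinely diverge. The paper likewise extends a nonzero $F$-derivation $d$ of the center $K$ to a derivation $\delta$ of $A$ (citing Hochschild/Amitsur for the extension step that you re-derive via $H^2_K(A,A)=0$), but it then avoids the reduced trace entirely: it normalizes $d(z)=1$ for some $z\in K$, uses Wedderburn's theorem that a finite-dimensional algebra is not spanned by its nilpotents to see that $N_A$ is proper, and, since $N_A$ is stable under multiplication by central elements, reads off from $\delta(zb)-z\delta(b)=b\notin N_A$ that one of $\delta(zb)$, $\delta(b)$ lies outside $N_A$. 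Your route instead bounds $N_A\subseteq\ker\mathrm{Trd}_{A/K}$ and proves the intertwining $\mathrm{Trd}_{A/K}\circ\delta=d\circ\mathrm{Trd}_{A/K}$ by passing to a Galois splitting field; this checks out (the inner part of the extended derivation of $M_m(L)$ is trace-free, and $\mathrm{Trd}$ is nonzero even when $p\mid\deg A$ by nondegeneracy of the trace form), but it buys the same contradiction at the cost of the splitting-field machinery, whereas the paper's trick needs only $zN_A\subseteq N_A$. Your sketched alternative via scalar extension should not be leaned on: besides the unproved stability of 2-zpd under finite base change, it needs $\beta\in K\setminus F$ with $\beta^p\in F$ rather than merely $\beta^p$ in the separable closure of $F$ in $K$; but your main argument does not depend on it.
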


\begin{proof}  We first remark that,  by Wedderburn's theorem,  $A$ is isomorphic to the matrix algebra $M_n(D)$ where $n\ge 2$ and $D$ a division algebra, and is hence  a  zpd algebra \cite[Corollary 2.17\,(a)]{zpdbook}.

Let $A$ be separable. Then
every derivation from  $A$ to an $A$-bimodule $M$ is inner \cite[Theorem 8.1.19]{Ford}, and so
Theorem \ref{ccc} implies that $A$ is 2-zpd. 
However, let us also prove this directly,  avoiding derivations. Thus,
  take a  bilinear functional $\varphi:A\times A\to F$ satisfying \eqref{2-zpd}. Theorem \ref{txyzw} tells us that
$\varphi$ satisfies   \eqref{xyzwa}.
Writing $1$ for $x$, $p_i$ for $y$, $q_iy$ for $z$, and $x$ for $w$ in \eqref{xyzwa}, where
$\sum_i p_i\otimes q_i$ is the element from the definition of a separable algebra, 
 we obtain
\begin{equation}\label{zra}\varphi(xp_i,q_iy) = \varphi(p_i,q_iyx)+ \varphi(x,p_iq_i y) - \varphi(1,p_iq_iyx)\end{equation}
 for all $x,y\in A$. Next, from
$$\sum_i xp_i \otimes q_i y
= \Big(\sum_i xp_i\otimes q_i\Big)(1\otimes y) 
=  \Big(\sum_i p_i\otimes q_ix\Big)(1\otimes y) 
= \sum_i p_i\otimes q_ixy
$$
it follows that
$$\sum_i \varphi(xp_i,q_iy) = \sum_i \varphi(p_i,q_ixy)$$
for all $x,y\in A$. Together with \eqref{zra} this gives
$$\sum_i \varphi(p_i,q_iyx)+ \varphi(x,p_iq_i y) - \varphi(1,p_iq_iyx)= \sum_i  \varphi(p_i,q_ixy)$$
Using 
 $\sum_i p_iq_i=1$ we obtain
 $$\varphi(x,y)= \sum_i \varphi(p_i,q_ixy)  -  \sum_i \varphi(p_i,q_iyx) +\varphi(1,yx),$$
 which shows that $\varphi$ is of the desired form \eqref{ta}. Thus, $A$ is 2-zpd.

To prove the converse, assume
that $A$ is not separable.  Then there exists a nonzero $F$-linear derivation $\delta$ of the center $K$ of $A$ \cite[Corollary 11.5.4]{cohn}.
Since $x\mapsto a\delta(x)$ is still a derivation for any $a\in K$, we may assume that $\delta(z) = 1$ for some $z\in K$. 
By \cite[Theorem 6]{H} (or \cite[Theorem]{amitsur}) we can extend $\delta$ to a derivation of $A$. Now, 
a theorem by Wedderburn   tells us that  a finite-dimensional algebra is not equal to the linear span of its nilpotent elements  (see, e.g., \cite[Theorem 3.21]{DF}). In particular, the linear span of all square-zero elements, which we denote by $N_A$, is a proper subspace of $A$. 
Take $b\in A\setminus N_A$.
From 
$$\delta(zb) - z\delta(b) = b \notin N_A$$
we see that at least one of the elements $\delta(zb)$ and $\delta(b)$ does not lie in $N_A$. Thus, 
$A$ contains an element $a$ such that
$\delta(a)\notin N_A$, and so   Corollary \ref{me}  tells us that $A$ is not 2-zpd.
\end{proof}


\begin{remark}
In light of Proposition \ref{pd},  we can state Theorem \ref{mt} in the following slightly more general form: If $A$ is a finite-dimensional semisimple algebra such that none of its simple components is a division algebra, then $A$ is
is 2-zpd if and only if $A$ is separable.
\end{remark}

\begin{remark}
If $A$ is as in Theorem \ref{mt}, then the linear span of square-zero elements $N_A$ is equal to $[A,A]$. Indeed, since $A$ is zpd, we have
 $[A,A]\subseteq N_A$  \cite[Theorem 9.1]{zpdbook}.  Proving the converse is easy: as $A$ is von Neumann regular, for every $a\in A$ there exists a $b\in A$ such that
$a=aba$, and hence $a= [ab,a]\in [A,A]$ if $a^2 =0$ (alternatively, one can use the fact that if viewing $A$ as an algebra over its center, 
$[A,A]$ has codimension $1$ in $A$ 
 \cite[Exercise 4.12]{INCA}, and hence it cannot be properly contained in $N_A$). The  proof of Theorem \ref{mt} thus shows that the condition that $A$ is 2-zpd  is also equivalent to the condition that every  derivation from $A$ to $A$ has  image  in $[A,A]$.
\end{remark}

\noindent
{\bf Acknowledgment.} The authors are thankful to Professor Claude Cibils for drawing their attention to \cite{Han}.

\end{document}